\newtheorem{mydef}{Definition}[section]
\begin{document}

\title{Weighted Extended B-Spline  Finite Element Analysis of a coupled  system of general  Elliptic equations %\thanks{Grants or other notes
%about the article that should go on the front page should be
%placed here. General acknowledgments should be placed at the end of the article.}
}
%\subtitle{WEB-S in CTRW}

\titlerunning{Coupled elliptic problem}        % if too long for running head

\author{Ayan Chakraborty  \and
         BV. Rathish Kumar %etc.
}

%\authorrunning{Short form of author list} % if too long for running head

\institute{Rathish Kumar \at
              Faculty Building,IIT Kanpur \\
               \email{bvrk@iitk.ac.in}           %  \\
%             \emph{Present address:} of F. Author  %  if needed
           \and
           Ayan Chakraborty \at
              \email{ayancha@iitk.ac.in}
}

\date{Received: date / Accepted: date}
% The correct dates will be entered by the editor

\maketitle

\begin{abstract}
In this study we establish the existence and uniqueness of the solution of a coupled system of general elliptic equations with anisotropic diffusion , non-uniform advection and variably influencing reaction terms  on Lipschitz continuous domain $\Omega \subset \mathbb{R}^m $ (m$\geq$1) with a Dirichlet boundary. Later we consider the finite element (FE) approximation of the coupled equations in a meshless framework based on weighted extended B-Spine functions (WEBS).The a priori error estimates corresponding to the finite element analysis are derived to establish the convergence of the corresponding FE scheme and the numerical methodology has been tested on few examples.

\keywords{Finite element \and Coupled Elliptic \and Existence and Uniqueness \and Error estimates}
% \PACS{PACS code1 \and PACS code2 \and more}
% \subclass{MSC code1 \and MSC code2 \and more}
\end{abstract}

\section{Introduction}

In this paper we are considering the following class of coupled system,namely general elliptic equations, related to the advection-reaction-diffusion systems appearing in the chemical and biological phenomena:
\begin{align}
\left\{
	\begin{array}{ll}
 - \nabla.(\mathfrak{P}(\nabla u_1)')+\mathfrak{R}.\nabla u_1+q_1 u_1+q_2 u_2 = f_1 ,~~ in ~ \Omega \\
-\nabla.(\mathfrak{P}(\nabla u_2)')+\mathfrak{R}.\nabla u_2+q_1 u_2-q_2 u_1 = f_2 ,~~ in ~ \Omega
\end{array}
\right.
\label{eq1}
  \end{align}

subjected to the Dirichlet boundary condition , where $\Omega$ is a Lipschitz continuous bounded  domain in $\mathbb{R}^m$ and the functions $q_1$ , $q_2$ are real essentially bounded which, in order to simplify the exposition,are assumed through out this paper to satisfy certain conditions (see section \ref{sec2}).$\mathfrak{P}$ and $\mathfrak{R}$ are  $m\times m$ symmetric matrix and $m\times1$ vector of essential bounded functions respectively.The elliptic system (\ref{eq1}) represents a steady case of reaction-diffusion system of interest in mathematical biology and physics\cite{sweers2003bifurcation}.Here real functions $u_1,u_2$ which are called activator and inhibitor respectively, can be interpreted as relative concentrations of two substances known as morphogens and the functions $\Phi_1,\Phi_2$ models autocatalytic and saturation effects.In the recent years,a lot of research has been focused  on the reaction-diffusion system (\ref{eq1}) with cross-diffusion  ,electrochemical engineering problem, from theoretical and numerical aspects, and among them coupled system of elliptic equations have received considerable attention,various forms of this system have been proposed in the literature. The aim of this paper is to obtain an existence and uniqueness of the solutions on any bounded Lipschitz  domain, in addition to carry out  a  new meshless (WEB-S) numerical method for the approximate solutions of the system. \\
On an arbitrary domain finite element approximation of these equations is a highly sought after approach to obtain their numerical solution;In particular WEB-FEM combines the computational advantages of B-Splines and standard mesh-based finite elements.Further it attains the degree and smoothness to be chosen flexibly without substantially increasing the size of problem.Off late spectral Galerkin method \cite{chen2015new}, Lattice-Boltzmann schemes \cite{suga2009stability} etc  have been used  for numerical approximations.In \cite{boglaev2012numerical} Boglaev  have used method of upper and lower solutions , and construct monotone sequence for difference scheme to approximate the solution of coupled system and Xiu et.al \cite{chen2015local} used stochastic Galerkin and stochastic collocation method in conjunction with the gPC expansions. In view of the computational advantages WEBS-FEA is one of highly  desired approach to solve the coupled elliptic system.In the current literature  no work is reported on WEBS-FEA of the general coupled elliptic problem.\\

Weighted Extended B-splines are a new class of finite element  basis functions on a conventional cartesian grid of nearly zero cost for solving Dirichlet problems on bounded domains in arbitrary dimensions.It was first proposed by Hollig et al. in \cite{hollig2001weighted}-\cite{hollig2003nonuniform}. The WEB-FEM  does not require any grid generation. Because of the simple subdivision formulas for tensor product B-splines, grid refinement techniques are easily implemented. The WEB-FEM is a meshless  technique which combines the computational efficiency of B-splines and standard mesh-based elements,with weight functions taking care of the boundary.\\

 Here the finite elements are constructed with scaled translates \textit{$b^n_k$,k} $\in$ $\mathbb{Z}^m$ of the standard m-variate tensor product B-spline of order n. B-splines \textit{$b^n_k$} (\textit{$b_k$ in abbrev}) are polynomials of degree n-1 in the variables $x_1,x_2,x_3. . . . .x_m$ on each grid cells $\mathbb{Q}_l$ = h ($[0,1]^m$ + $l$), $k_i \leq l_i\leq k_i + n$ in their support(see figure (\ref{1}) ; provided by H$\ddot{o}$llig).In case of Dirichlet problem on a bounded domain $ \Omega \subset \mathbb{R}^m$, we multiply \textit{$b_k$} by a smoothed version \textit{w} of the distance function to $\partial \Omega$. Then the span of weighted B-splines
\begin{align*}
span \{w b_k :  \Omega \cap supp~ b_k \neq \phi \}
\end{align*}
is a possible finite element subspace which conforms both the boundary conditions and yields approximations of optimal order.\\

\textit{ In our forthcoming discussion bold letters represents a vector field (i.e, vector valued functions). Henceforth , the grid width used for the spline approximation is denoted by h, and for the functions f,g we write f $\preceq$ g , if f $\leq$ cg and f $\asymp$ g ,if f=cg  for some positive constant c which doesn't depend on the grid width , indices, or arguments of functions and if $\mathbf{x} = (x_1,x_2) ,  \mathbf{y} = (y_1,y_2) $ then $\mathbf{x} \star \mathbf{y} = (x_1 y_1 , x_2 y_2)$ }\\

The paper is organized under six sections. Abstract variational formulation of the problem is introduced in section (\ref{sec1}). Section (\ref{sec2}) deals with the  existence and uniqueness aspects of the problem. Section (\ref{sec4}) deals with  a priori estimates and the convergence analysis. Numerical examples are presented in section (\ref{sec5})

\begin{figure}[h!]
% Use the relevant command to insert your figure file.
% For example, with the graphicx package use
  \includegraphics[width=0.75\textwidth]{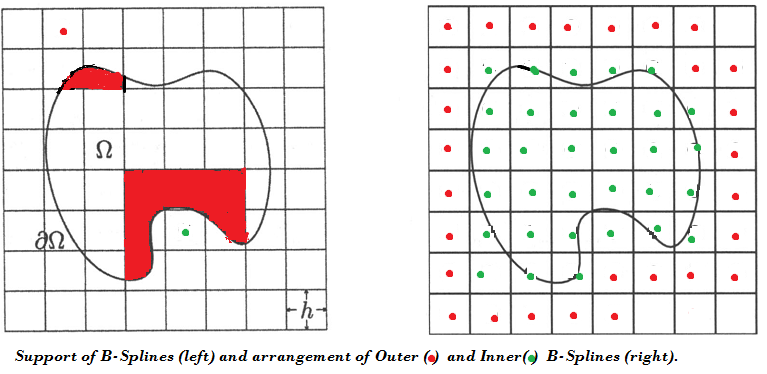}
% figure caption is below the figure
\caption{Outer and Inner b-splines}
%%%\includegraphics{trigo}
%%%\caption{fall-off error against degree of polynomial}
\label{1}       % Give a unique label
\end{figure}

\section{Abstract Variational Formulation}
\label{sec1}
Let us consider an abstract boundary value problem
\begin{align*}
\mathcal{L}\mathbf{u} = \mathbf{f} ,~~ in~ \Omega ~~ and \mathcal{B}\mathbf{u} = 0 ~on ~ \partial \Omega
\end{align*}
with a differential operator $\mathcal{L}$ and an operator $\mathcal{B}$ describing the boundary conditions. Moreover, this problem admits a variational formulation ,(see \cite{brenner2008mathematical})
\begin{align}
a(\mathbf{u,v})= \lambda (\mathbf{v}) ,~~ \mathbf{v}~ \in H
\label{eq2}
\end{align}
where $a$ is a bilinear form and $\lambda$ is a linear functional on a Hilbert Space $H$.Then, for a finite element subspace $\mathbb{B}_h \subset H $, the Ritz-Galerkin approximation $\mathbf{u}_h$ is defined by
\begin{align*}
a(\mathbf{u}_h,\mathbf{v}_h) = \lambda(\mathbf{v}_h) ,~ \mathbf{v}_h ~ \in ~ \mathbb{B}_h
\end{align*}
The coefficients $\mathbf{a}_i$ of $\mathbf{u}_h$  with respect to a basis $\mathbf{B}_i$ of $\mathbb{B}_h$ , are determined via the linear system obtained by using $\mathbf{v}_h$ = $\mathbf{B}_k$ as test functions.

\subsection{Ritz-Galerkin Approximation}
The Ritz-Galerkin approximation $\mathbf{u}_h (\sum_i \mathbf{a}_i \star \mathbf{B}_i$ $\in$ $\mathbb{B}_h$ $\subset H $) of the variational problem (\ref{eq2}) is determined by the linear system
\begin{align}
&\sum_i a(\mathbf{B_i,B_k}) = \lambda(\mathbf{B}_k) \nonumber\\
\textrm{which is written as},\nonumber\\
&\mathbf{G}_h \mathbf{U=F}.\nonumber
\nonumber
\end{align}

 It is well known (cf. \cite{gilbarg2015elliptic} and \cite{strang1973analysis}) that the error of the approximation $\mathbf{u}_h$ can be bounded in terms of the distance of u to the subspace $\mathbb{B}_h$, spanned by $\mathbf{B}_i$.For  $\mathbf{v}$ $\in$ $H^l(\Omega)$
\begin{align*}
||\mathbf{v}||_{l,\Omega} = \big(\sum_{|\alpha|\leq l} \int_{\Omega}|D^{\alpha} \mathbf{v}|^2 \big)^{1/2}
\end{align*}
represents the norm of the Sobolev space $H^l(\Omega)$  we have
\begin{align*}
||\mathbf{u-u_h}||_1 \preceq  inf_{\mathbf{v}_h \in \mathbb{B}_h }||\mathbf{u-v_h}||_1
\end{align*}

which in other words implies the following typical error estimate for the standard finite element subspaces involving piecewise polynomials of degree $\leq$ $ n- 1$
\begin{align}
||\mathbf{u-u_h}||_1 \preceq h^{n-1}||\mathbf{u}||_n
\label{eq3}
\end{align}

Finally , a feasible estimation for the Galerkin matrix is essential and  for standard finite element subspaces using quasi-uniform partitions, the condition with respect to the 2-norm can be bounded in terms of the grid width,
\begin{align}
cond_2 G_h \preceq h^{-2}
\label{eq4}
\end{align}
which is moderate enough so that iterative methods, like the preconditioned conjugate gradient algorithm, can be employed to solve the Galerkin
system efficiently and in a stable way.We shall show in the following that the properties (\ref{eq3}) and (\ref{eq4}) remain valid for our new class of finite elements. Hence,WEB-FEM conforms the basic requirements of approximation.

\section{Coupled System of General Elliptic Operators}
\label{sec2}
In this section we will study some questions related to the existence and uniqueness of solutions , of the coupled systems of equations of general elliptic operators.

\subsection{Existence and uniqueness}

Consider the following system of general Elliptic Equations :
 \begin{align*}
 \left\{
	\begin{array}{ll}
 - \nabla.(\mathfrak{P}(\nabla u_1)')+\mathfrak{R}.\nabla u_1+q_1(\mathbf{x}) u_1+q_2(\mathbf{x}) u_2 = f_1(\mathbf{x}) ,~~ in ~ \Omega\\
 -\nabla.(\mathfrak{P}(\nabla u_2)')+\mathfrak{R}.\nabla u_2+q_1(\mathbf{x}) u_2-q_2(\mathbf{x}) u_1 = f_2(\mathbf{x}) ,~~ in ~ \Omega
 \end{array}
 \right.
\end{align*}

with $u_1=u_2$ = 0 on the boundary, $\mathfrak{P}=(a_{ij}(\mathbf{x}))_{n\times n},$  is a symmetric matrix and $ \mathfrak{R}=(r_k(\mathbf{x}))_{n\times 1}~~ ,~' $ denote transpose. If $f_1,f_2,u_1,u_2 \in L^2(\Omega)$ then from regularity theorem $u_1,u_2~ \in H^2(\Omega)$.Precisely,\textbf{u} $  \in  (H^2(\Omega)\bigcap H^1_0(\Omega))^2$ \\

Let $\Omega$ be a bounded open set in $\mathbb{R}^n$. Consider the space $(H_0^1(\Omega))^2=V$ with its product norm($ ||(._1,._2)||=||._1|| + ||._2||$). Define for \textbf{u}=($u_1,u_2$), \textbf{v}=($v_1,v_2$) in V, the bilinear form \\

a(\textbf{u},\textbf{v})= $\int_{\Omega} [\nabla v_1 \mathfrak{P}(\nabla u_1)'+\nabla v_2 \mathfrak{P}(\nabla u_2)']+ \int_{\Omega}[v_1\mathfrak{R}.\nabla u_1+v_2\mathfrak{R}.\nabla u_2] + \int_{\Omega} \textbf{u}\mathfrak{Q}\textbf{v}'$\\
\[where,\mathfrak{Q}=\left(\begin{array}{cc}
 q_1 & -q_2  \\
 q_2 & q_1  \end{array} \right)
\]\\
Clearly,
\begin{align}
 a(\textbf{u},\textbf{u})=\int_{\Omega} [\nabla u_1 \mathfrak{P}(\nabla u_1)'+\nabla u_2 \mathfrak{P}(\nabla u_2)']+ \int_{\Omega}[u_1\mathfrak{R}.\nabla u_1+u_2\mathfrak{R}.\nabla u_2] + \int_{\Omega} q_1(u_1^2 + u_2^2)
 \label{eq5}
\end{align}

which may not be coercive in general. Thus if \textbf{f} = ($f_1,f_2$)~ $\in ~ (L^2(\Omega))^2$, we cannot use Lax-Milgram lemma directly to prove the existence of a solution  to the problem: Find \textbf{u} $\in V $ such that
\begin{align}
a(\textbf{u},\textbf{v})=\langle\textbf{f},\textbf{v}\rangle , for ~ every ~ \mathbf{v}~ \in V
\label{eq6}
\end{align}

\begin{theorem}

Let us assume that $\exists$ a $\alpha >$0 such that, $A'\mathfrak{P} A \geq \alpha A'A $   and ess-inf\{$q_1(x)$ : $x\in \Omega $\}=$ \kappa \geq \frac{\alpha}{2} + \frac{\mathcal{B}^2}{2 \alpha}$ a.e in $\Omega$ where, $\mathcal{B}^2 = \Sigma_1^n \parallel r_k \parallel ^2 _\infty  ~~   a_{ij},r_k,q_1,q_2 \in L^\infty(\Omega)$ then (\ref{eq6}) possess an unique solution.
\label{thm1}

\end{theorem}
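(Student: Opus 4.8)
The plan is to establish coercivity of the bilinear form $a(\cdot,\cdot)$ on $V = (H_0^1(\Omega))^2$ under the stated hypotheses, and then invoke the Lax--Milgram lemma. The excerpt notes that $a(\mathbf{u},\mathbf{u})$ given in \eqref{eq5} ``may not be coercive in general''; the point of the hypotheses on $\alpha$ and $\kappa$ is precisely to force coercivity. So the heart of the argument is a careful lower bound on the three integrals appearing in \eqref{eq5}.

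First I would treat the diffusion term: by the ellipticity assumption $A'\mathfrak{P}A \geq \alpha A'A$ applied pointwise with $A = \nabla u_i$, we get $\int_\Omega \nabla u_i\,\mathfrak{P}(\nabla u_i)' \geq \alpha \int_\Omega |\nabla u_i|^2 = \alpha\,|u_i|_{1}^2$ for $i=1,2$. Next I would bound the advection term from below using Cauchy--Schwarz and Young's inequality: for each $i$,
\begin{align*}
\left| \int_\Omega u_i\,\mathfrak{R}\cdot\nabla u_i \right| \leq \mathcal{B}\,\|u_i\|_{0}\,|u_i|_{1} \leq \frac{\mathcal{B}^2}{2\alpha}\|u_i\|_0^2 + \frac{\alpha}{2}|u_i|_1^2,
\end{align*}
where $\mathcal{B}^2 = \sum_1^n \|r_k\|_\infty^2$ controls $|\mathfrak{R}\cdot\nabla u_i|$ pointwise via Cauchy--Schwarz in $\mathbb{R}^n$. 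Finally the reaction term contributes $\int_\Omega q_1(u_1^2+u_2^2) \geq \kappa(\|u_1\|_0^2 + \|u_2\|_0^2)$ since $q_1 \geq \kappa$ a.e.; note the off-diagonal $q_2$ terms cancel exactly in $a(\mathbf{u},\mathbf{u})$ because of the skew structure of $\mathfrak{Q}$, which is why only $q_1$ enters. Combining, for each $i$,
\begin{align*}
a(\mathbf{u},\mathbf{u}) \geq \sum_{i=1}^2 \left[ \alpha|u_i|_1^2 - \frac{\alpha}{2}|u_i|_1^2 - \frac{\mathcal{B}^2}{2\alpha}\|u_i\|_0^2 + \kappa\|u_i\|_0^2 \right] = \sum_{i=1}^2 \left[ \frac{\alpha}{2}|u_i|_1^2 + \left(\kappa - \frac{\mathcal{B}^2}{2\alpha}\right)\|u_i\|_0^2 \right].
\end{align*}
The hypothesis $\kappa \geq \frac{\alpha}{2} + \frac{\mathcal{B}^2}{2\alpha}$ gives $\kappa - \frac{\mathcal{B}^2}{2\alpha} \geq \frac{\alpha}{2}$, so $a(\mathbf{u},\mathbf{u}) \geq \frac{\alpha}{2}\sum_i (|u_i|_1^2 + \|u_i\|_0^2) = \frac{\alpha}{2}\sum_i \|u_i\|_1^2$, which dominates a constant times $\|\mathbf{u}\|_V^2$ after relating the product norm $\|\mathbf{u}\|_V = \|u_1\|_1 + \|u_2\|_1$ to $(\|u_1\|_1^2+\|u_2\|_1^2)^{1/2}$ by equivalence of norms on $\mathbb{R}^2$. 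Boundedness of $a$ is routine from $a_{ij}, r_k, q_1, q_2 \in L^\infty(\Omega)$ together with Cauchy--Schwarz, and $\mathbf{v} \mapsto \langle \mathbf{f}, \mathbf{v}\rangle$ is continuous on $V$ since $\mathbf{f} \in (L^2(\Omega))^2$. Lax--Milgram then yields a unique $\mathbf{u} \in V$ solving \eqref{eq6}.

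The main obstacle is bookkeeping rather than depth: one must choose the Young's inequality splitting parameter correctly (here $\alpha$) so that exactly half the coercive diffusion energy is consumed by the advection term, leaving the reaction hypothesis to absorb the leftover $L^2$ deficit — the constants in the statement are tuned precisely for this split. A secondary point worth stating carefully is why the $q_2$ cross-terms vanish in the quadratic form; this is immediate from writing out $\mathbf{u}\,\mathfrak{Q}\,\mathbf{u}' = q_1 u_1^2 + q_1 u_2^2 - q_2 u_1 u_2 + q_2 u_2 u_1 = q_1(u_1^2+u_2^2)$, consistent with \eqref{eq5}, but it is the structural reason no sign condition on $q_2$ is needed.
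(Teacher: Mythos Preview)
Your proof is correct and follows essentially the same route as the paper: ellipticity of $\mathfrak{P}$ for the diffusion term, H\"older/Cauchy--Schwarz followed by Young's inequality $ab\le \frac{\delta a^2}{2}+\frac{b^2}{2\delta}$ (with $\delta=\alpha$) for the advection term, the lower bound $q_1\ge\kappa$ for the reaction term, and then Lax--Milgram. If anything, your version is more explicit than the paper's about the choice of the Young parameter and why the hypothesis $\kappa\ge \frac{\alpha}{2}+\frac{\mathcal{B}^2}{2\alpha}$ is exactly what is needed.
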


\begin{proof}
 We shall proceed as follows. From (\ref{eq5}) we obtain,
\begin{align*}
a(\textbf{u,u}) \succeq (|u_1|^2_{H^1(\Omega)} +|u_2|^2_{H^1(\Omega)}) +  \int_{\Omega}\sum_k r_k(x)[\frac{\partial u_1}{\partial x_k}u_1+\frac{\partial u_2}{\partial x_k}u_2] +\int_{\Omega} q_1(u_1^2 + u_2^2)
\end{align*}

Estimation from H$\ddot{o}$lder's Inequality,

\begin{eqnarray}\label{eqexpmuts}
|\int_{\Omega}\sum_k r_k(x)\frac{\partial u_1}{\partial x_k}u_1|
\nonumber\\
& \leq &
\int_{\Omega} \Sigma_k |r_k(x)| |\frac{\partial u_1(x)}{\partial x_k}| |u_1|
\nonumber\\
&\leq &
 \Sigma_k||r_k|| _{L^\infty (\Omega)}\int_{\Omega} |\frac{\partial u_1(x)}{\partial x_k}| |u_1|
\nonumber\\
& \leq &
\Sigma_k||r_k|| _{L^\infty (\Omega)} ||\frac{\partial u_1}{\partial x_k}||_{L^2(\Omega)} ||u_1||_{L^2(\Omega)}
\nonumber\\
& \leq &
\mathcal{B} |u_1|_{H^1(\Omega)} ||u_1||_{L^2(\Omega)}
\end{eqnarray}
Similarly,
\begin{align*}
|\int_{\Omega}\sum_k r_k(x)\frac{\partial u_2}{\partial x_k}u_2| \leq \mathcal{B} |u_2|_{H^1(\Omega)} ||u_2||_{L^2(\Omega)}
\end{align*}
Therefore,
\begin{eqnarray}
 a(\textbf{u,u}) \succeq (|u_1|^2_{H^1(\Omega)} +|u_2|^2_{H^1(\Omega)}) - \mathcal{B} (|u_1|_{H^1(\Omega)} ||u_1||_{L^2(\Omega)} + |u_2|_{H^1(\Omega)}
 \nonumber\\
 + ||u_2||_{L^2(\Omega)}) + \kappa (||u_1 ||^2_{L^2(\Omega)}+ ||u_2||^2_{L^2(\Omega)})
\end{eqnarray}

 from the inequality, $ ab\leq \frac{\delta a^2}{2}+ \frac{b^2}{2\delta}$  finally we obtain ,
\begin{align}
 a(\textbf{u,u}) \succeq ( ||u_1||_{H^1(\Omega)} ^2 + ||u_2||_{H^1(\Omega)} ^2) \succeq ||\mathbf{u}||^2_V ,~~ i.e~ coercive
\end{align}
Moreover from Poincare and C-S Inequality it can be shown that,
\begin{align}
a(\textbf{u,v})\preceq ||\mathbf{u}||_V ||\mathbf{v}||_V  ,~~ i.e ~ continuous
\end{align}
Hence,Lax-Milgram lemma ensures a unique solution of (\ref{eq6})
\end{proof}

%%%%\section{Preliminaries of WEB-Splines}
%%%%\label{sec3}
%%%%
%%%%\begin{mydef}Splines on Bounded Domains\\
%%%%The Splines $\mathbb{B}^n_h(D)$ on a bounded domain D$\subset$ $\mathbb{R}^m$ consist of all linear combinations $\sum\limits_{k\in \mathbf{K}} c_k b^n_{k,h}$ of relevant B-Splines; i.e, the set $\mathbf{K}$ of relevant indices contains all k with $b^n_{k,h}(x)\ne$0 for some x $\in$ D,where $b^n_{k,h}(x)= b^n (x/h-k)$ is the scaled translates.
%%%%\end{mydef}
%%%%
%%%%\begin{mydef} Inner and Outer Splines\\
%%%%
%%%%Grid cells $\mathbb{Q}$ = h ($[0,1]^m$ + $l$) ; $ l \in \mathbb{Z}^m$ are partitioned into interior,boundary and exterior cells depending on whether $\mathbb{Q}\subseteq\bar{D}$ , the interior of $\mathbb{Q}$ intersects $\partial D$ , or $\mathbb{Q}\cap D = \phi $. Among the relevant B-Splines,\textit{$b_k,k$}$\in $ \textbf{K}, distinction  made between inner B-Splines
%%%% \begin{center}
%%%% \textit{$b_i$,i} $\in $ \textbf{I}
%%%%  \end{center}
%%%%  which have at least one interior cell in their support, and outer B-Splines
%%%%   \begin{center}
%%%%   \textit{$b_j$,j} $\in $ \textbf{J=K$\backslash$I}
%%%%   \end{center}
%%%%   for which supp \textit{$b_j$} consists entirely of boundary and exterior cells.fig\ref{fig2}
%%%%   \end{mydef}
%%%%
%%%%\begin{theorem} [Marsden's Identity] The Spline
%%%%\begin{align*}
%%%%p=\sum_{k \in K} q(k)b_k
%%%%\end{align*}
%%%%is a polynomial of order n on D iff $q$ is a polynomial of order n on K.
%%%%\label{thm2}
%%%%\end{theorem}

\begin{mydef} Weighted Extended B-Splines [WEB-S]\\

For i $\in$ I, the WEB-S $B_i$ is defined by
\begin{align*}
B_i=\frac{w}{w(x_i)} \Big (b_i+\sum\limits_{j\in{J}}e_{i,j}b_j \Big) ,
\end{align*}
where $x_i$ denotes the center of the grid cell $\mathbb{Q}_{i+l(i)}$ corresponding $b_i$.The coefficients $e_{i,j}$ satisfy
\begin{align*}
|e_{i,j}| \preceq 1, e_{i,j}=0 for ||i-j||\succeq 1
\end{align*}
and are chosen so that all weighted polynomials ( abbrev wp) of order n are contained in the web space $\mathbb{B}_h=span\{B_i: i\in \mathbf{I}$\}
\label{def1}
\end{mydef}

\section{Convergence of WEB-Spline}
\label{sec4}
In this section we prove that the numerical solution [$u_{1,h},u_{2,h}$] converges to a weak solution [$u_1,u_2$] as h $\rightarrow$ 0. Moreover we have extended all our preceding results in vector field.

\subsection{Error Estiamtion}
\begin{theorem}
Assume $\mathbf{u} \in H_0^n(\Omega) $ is the solution  of (\ref{eq6}) and $\mathbf{u}_h$ is the solution of discrete system of (\ref{eq1}) , then the following error estimation holds :
\begin{align*}
|\mathbf{u-u_h}|_1 \preceq h^{n-1} ||\mathbf{u}||_n
\end{align*}
\end{theorem}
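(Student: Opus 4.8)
The plan is to run the classical Céa-type argument in the product space $V=(H^1_0(\Omega))^2$ and then feed it the approximation power of the weighted extended B-spline space $\mathbb{B}_h$. First I would record Galerkin orthogonality: since $\mathbb{B}_h\subset V$, subtracting the discrete equation $a(\mathbf{u}_h,\mathbf{v}_h)=\langle\mathbf{f},\mathbf{v}_h\rangle$ from (\ref{eq6}) tested against $\mathbf{v}=\mathbf{v}_h$ gives $a(\mathbf{u}-\mathbf{u}_h,\mathbf{v}_h)=0$ for all $\mathbf{v}_h\in\mathbb{B}_h$. Next I would invoke the coercivity and continuity of $a$ established in Theorem \ref{thm1}, i.e. $a(\mathbf{w},\mathbf{w})\succeq\|\mathbf{w}\|_V^2$ and $a(\mathbf{w},\mathbf{z})\preceq\|\mathbf{w}\|_V\|\mathbf{z}\|_V$. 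For an arbitrary $\mathbf{v}_h\in\mathbb{B}_h$, orthogonality yields $\|\mathbf{u}-\mathbf{u}_h\|_V^2\preceq a(\mathbf{u}-\mathbf{u}_h,\mathbf{u}-\mathbf{u}_h)=a(\mathbf{u}-\mathbf{u}_h,\mathbf{u}-\mathbf{v}_h)\preceq\|\mathbf{u}-\mathbf{u}_h\|_V\,\|\mathbf{u}-\mathbf{v}_h\|_V$, whence $\|\mathbf{u}-\mathbf{u}_h\|_V\preceq\inf_{\mathbf{v}_h\in\mathbb{B}_h}\|\mathbf{u}-\mathbf{v}_h\|_V$. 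By Poincaré the full norm on $V$ is equivalent to the seminorm $|\cdot|_1$, so it remains only to exhibit one $\mathbf{v}_h\in\mathbb{B}_h$ with $|\mathbf{u}-\mathbf{v}_h|_1\preceq h^{n-1}\|\mathbf{u}\|_n$.

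For that I would take, componentwise, the weighted extended B-spline quasi-interpolant $\Pi_h u_j\in\mathbb{B}_h$ from Höllig's construction and set $\mathbf{v}_h=(\Pi_h u_1,\Pi_h u_2)$. By Definition \ref{def1} the web space $\mathbb{B}_h$ contains all weighted polynomials of order $n$, and the extension coefficients obey $|e_{i,j}|\preceq 1$ with $e_{i,j}=0$ for $\|i-j\|\succeq 1$, so the extended basis is locally supported and stable. On each grid cell $\mathbb{Q}$ a Taylor expansion of $u_j$ to order $n$, together with the reproduction property and the bounds $w\asymp\mathrm{dist}(\cdot,\partial\Omega)$ on the weight and its derivatives, gives a local estimate $|u_j-\Pi_h u_j|_{1,\mathbb{Q}}\preceq h^{n-1}\|u_j\|_{n,\widetilde{\mathbb{Q}}}$, where $\widetilde{\mathbb{Q}}$ is the union of the finitely many cells meeting the supports of the B-splines active on $\mathbb{Q}$. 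Squaring, summing over all cells and using the bounded overlap of the patches $\widetilde{\mathbb{Q}}$ produces $|u_j-\Pi_h u_j|_1\preceq h^{n-1}\|u_j\|_n$; adding the two components gives $|\mathbf{u}-\mathbf{v}_h|_1\preceq h^{n-1}\|\mathbf{u}\|_n$, and combining this with the Céa estimate above completes the proof.

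The step I expect to be the real obstacle is the local approximation estimate for $\Pi_h u_j$ near $\partial\Omega$: one must check that multiplying B-splines by the weight $w$, which vanishes on the boundary, does not lower the local order, and that the ``outer'' B-splines, after the extension step, still contribute only a bounded number of terms per cell so that the finite-overlap summation is legitimate. This is precisely where the WEB-spline construction (as opposed to plain weighted B-splines, whose Gram matrix degenerates) does the work; the Céa part and the passage to the product space $V$ are routine once Theorem \ref{thm1} is in hand. A clean way to package the boundary analysis is to prove it first for a single scalar equation and then apply it coordinate by coordinate, since the coupling enters only through the zeroth-order matrix $\mathfrak{Q}$ and therefore does not affect the approximation-theoretic estimate.
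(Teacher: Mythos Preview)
Your argument is correct and is, at the abstract level, the same C\'ea-type mechanism the paper uses; the difference is in packaging. You invoke the coercivity and continuity of $a(\cdot,\cdot)$ from Theorem~\ref{thm1} once and for all and then run the two-line C\'ea estimate $\|\mathbf{u}-\mathbf{u}_h\|_V\preceq\inf_{\mathbf{v}_h}\|\mathbf{u}-\mathbf{v}_h\|_V$, feeding it the WEB-spline quasi-interpolant. The paper instead re-derives a C\'ea-type bound by hand: it writes out the two scalar error equations, tests them against the specific choice $\mathbf{v}_h=\mathbf{u}_h-\mathrm{P}_h\mathbf{u}$, adds the equations so that the $q_2$ cross-terms with $(u_1-u_{1,h})(u_2-u_{2,h})$ cancel explicitly, and then uses the ellipticity of $\mathfrak{P}$, the sign of $q_1$, and Poincar\'e to reach $|\mathbf{u}-\mathbf{u}_h|_1\preceq|\mathbf{u}-\mathrm{P}_h\mathbf{u}|_1$. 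Both routes then quote the same WEB-spline projection/quasi-interpolation estimate $\|\mathbf{u}-\mathrm{P}_h\mathbf{u}\|_1\preceq h^{n-1}\|\mathbf{u}\|_n$ from H\"ollig.

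What each buys: your route is shorter and makes clear that nothing beyond Theorem~\ref{thm1} and the scalar WEB-spline approximation theory is needed, so the coupling through $\mathfrak{Q}$ is genuinely irrelevant at the error-analysis stage. The paper's explicit computation, on the other hand, shows concretely how the antisymmetric $q_2$ coupling drops out when the two equations are summed, without appealing back to the abstract coercivity proof. Your identification of the boundary approximation step (weight $w$ vanishing on $\partial\Omega$, stability of the extension coefficients) as the only nontrivial ingredient matches the paper, which simply cites H\"ollig for that estimate rather than reproving it.
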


\begin{proof}

Let, $\mathbf{u_h}=(u_{1,h},u_{2,h})$ be the  discrete  and $\mathbf{u}=(u_1,u_2)$ be a weak solution of the system (\ref{eq1}). Then we have the following error equations.

\begin{align*}
\left\{
	\begin{array}{ll}
\int_{\Omega} \nabla v_1 \mathfrak{P} [\nabla(u_1 - u_{1,h})]' + \int_{\Omega} \mathfrak{R}.\nabla (u_1 - u_{1,h}) v_1 + \int_{\Omega} q_1 (u_1 - u_{1,h}) v_1 +\int_{\Omega} q_2 (u_2 - u_{2,h}) v_1 =0\\
\int_{\Omega} \nabla v_2 \mathfrak{P} [\nabla(u_2 - u_{2,h})]' +\int_{\Omega} \mathfrak{R}.\nabla (u_2 - u_{2,h}) v_2 + \int_{\Omega} q_1 (u_2 - u_{2,h}) v_2 - \int_{\Omega} q_2 (u_1 - u_{1,h}) v_2  = 0
\end{array}
\right.
\end{align*}

for all $\mathbf{v}=(v_1,v_2)$  $\in$ V \\

taking $(u_{1,h} - \mathrm{P_h}u_1 , u_{2,h} - \mathrm{P_h}u_2)$ as a test function  to the  above discrete error equation, we have the equality.

\begin{eqnarray}\label{eqexpmuts}
\int_{\Omega} \nabla (u_{1,h} - \mathrm{P_h}u_1) \mathfrak{P} [\nabla(u_1 - u_{1,h})]' + \int_{\Omega} \mathfrak{R}.\nabla (u_1 - u_{1,h})(u_{1,h} - \mathrm{P_h}u_1)
\nonumber\\
+ \int_{\Omega} q_1 (u_1 - u_{1,h}) (u_{1,h} - \mathrm{P_h}u_1) +\int_{\Omega} q_2 (u_2 - u_{2,h}) (u_{1,h} - \mathrm{P_h}u_1) = 0
\label{eq8}
\end{eqnarray}

\begin{eqnarray}\label{eqexpmuts}
\int_{\Omega} \nabla (u_{2,h} - \mathrm{P_h}u_2) \mathfrak{P} [\nabla(u_2 - u_{2,h})]' + \int_{\Omega} \mathfrak{R}.\nabla (u_2 - u_{2,h})(u_{2,h} - \mathrm{P_h}u_2)
\nonumber\\
 +\int_{\Omega} q_1 (u_2 - u_{2,h}) (u_{2,h} - \mathrm{P_h}u_2) - \int_{\Omega} q_2 (u_1 - u_{1,h}) (u_{2,h} - \mathrm{P_h}u_2)=0
 \label{eq9}
\end{eqnarray}

 from (\ref{eq8}),

\begin{eqnarray*}\label{eqexpmuts}
\int_{\Omega} \sum_{i,j} a_{ij} \frac{\partial}{\partial x_i}(u_1 - u_{1,h}) \frac{\partial}{\partial x_j}(u_1 - u_{1,h}) &=& \int_{\Omega} \sum_{i,j} a_{ij} \frac{\partial}{\partial x_i}(u_1 - u_{1,h})\frac{\partial}{\partial x_j}(u_1-\mathrm{P_h}u_1) \\ +\int_{\Omega} \mathfrak{R}.\nabla (u_1 - u_{1,h})(u_{1,h} - \mathrm{P_h}u_1) &-& \int_{\Omega} q_1 (u_1 - u_{1,h})^2  \\ +\int_{\Omega} q_1 (u_1 - u_{1,h})(u_1 - \mathrm{P_h}u_1)& -& \int_{\Omega} q_2(u_2 - u_{2,h}) (u_1 - u_{1,h}) + \int_{\Omega} q_2 (u_2 - u_{2,h})(u_1 - \mathrm{P_h}u_1)
\end{eqnarray*}

Similarly (\ref{eq9}) yields,

\begin{eqnarray*}\label{eqexpmuts}
\int_{\Omega} \sum_{i,j} a_{ij} \frac{\partial}{\partial x_i}(u_2 - u_{2,h}) \frac{\partial}{\partial x_j}(u_2 - u_{2,h}) &=& \int_{\Omega} \sum_{i,j} a_{ij} \frac{\partial}{\partial x_i}(u_2 - u_{2,h}) \frac{\partial}{\partial x_j}(u_2-\mathrm{P_h}u_2) \\+\int_{\Omega} \mathfrak{R}.\nabla (u_2 - u_{2,h})(u_{2,h} - \mathrm{P_h}u_2) &-& \int_{\Omega} q_1 (u_2 - u_{2,h})^2 \\ +  \int_{\Omega} q_1 (u_2 - u_{2,h})(u_2 - \mathrm{P_h}u_2) &+& \int_{\Omega} q_2(u_1 - u_{1,h}) (u_2 - u_{2,h}) - \int_{\Omega} q_2 (u_1 - u_{1,h})(u_2 - \mathrm{P_h}u_2)
\end{eqnarray*}

adding both,

\begin{eqnarray*}\label{eqexpmuts}
\int_{\Omega} \sum_{i,j} a_{ij} [\frac{\partial}{\partial x_i}(u_2 - u_{2,h}) \frac{\partial}{\partial x_j}(u_2 - u_{2,h}) + \frac{\partial}{\partial x_i}(u_1 - u_{1,h}) \frac{\partial}{\partial x_j}(u_1 - u_{1,h})] \\ =  \int_{\Omega} \sum_{i,j} a_{ij} [\frac{\partial}{\partial x_i}(u_2 - u_{2,h}) \frac{\partial}{\partial x_j}(u_2-\mathrm{P_h}u_2) + \frac{\partial}{\partial x_i}(u_1 - u_{1,h}) \frac{\partial}{\partial x_j}(u_1-\mathrm{P_h}u_1)] \\- \int_{\Omega} q_2 [(u_1 - u_{1,h})(u_2 - \mathrm{P_h}u_2) - (u_2 - u_{2,h})(u_1 - \mathrm{P_h}u_1)] - \int_{\Omega} q_1 [(u_1 - u_{1,h})^2 +(u_2 - u_{2,h})^2] \\+ \int_{\Omega} q_1 [(u_1 - u_{1,h})(u_1 - \mathrm{P_h}u_1) + (u_2 - u_{2,h})(u_2 - \mathrm{P_h}u_2)]
\end{eqnarray*}

from our assumption , $\mathfrak{P}$ is positive definite by $\alpha ~>$ 0 and $a_{ij}~\in~L^{\infty}$ yields ,

\begin{eqnarray*}\label{eqexpmuts}
\alpha \int_{\Omega} \sum_i  [|\frac{\partial}{\partial x_i}(u_2 - u_{2,h})|^2 + |\frac{\partial}{\partial x_i}(u_1 - u_{1,h})|^2] \preceq |u_1 - u_{1,h}|_1 |u_1 - \mathrm{P_h}u_1|_1  + |u_2 - u_{2,h}|_1 |u_2 - \mathrm{P_h}u_2|_1\\ + |u_1 - u_{1,h}|_0 (|u_2 - \mathrm{P_h}u_2|_0 + |u_1 - \mathrm{P_h}u_1|_0) + |u_2 - u_{2,h}|_0 (|u_1 - \mathrm{P_h}u_1|_0\\ + |u_2 - \mathrm{P_h}u_2|_0) - \int_{\Omega} q_1 [(u_1 - u_{1,h})^2 +(u_2 - u_{2,h})^2]
\end{eqnarray*}

\begin{align}
\int_{\Omega} q_1 [(u_1 - u_{1,h})^2 +(u_2 - u_{2,h})^2]  + \frac{\alpha}{2} |\mathbf{u - u_h}|^2_1 \preceq |\mathbf{u - u_h}|_1 |\mathbf{u - \mathrm{P_h}u}|_1 + |\mathbf{u - u_h}|_0 |\mathbf{u - \mathrm{P_h}u}|_0
\label{eq10}
 \end{align}

 from the inequality , $\frac{(a+b)^2}{2} \leq (a^2 + b^2)$ and our definition of product norms.\\

Clearly $q_1 >0 $ , hence

\begin{align}
 |\mathbf{u - u_h}|_1 \preceq |\mathbf{u - \mathrm{P_h}u}|_1  ; \textrm{from Poincare Inequality}
 \end{align}

Now we give the projection error estimate for \textbf{u},
\begin{theorem} Let, $\mathbf{u}\in H^n$ be a weak solution of .Then,
\begin{eqnarray}
\|\mathbf{u-\mathrm{P_h}u}\|_1\preceq h^{n-1} \|\mathbf{u}\|_n
\label{eq11}
\end{eqnarray}
\end{theorem}

The proof uses standard quasi interpolation techniques and can be found on \cite{hollig2003finite}.

Hence from (\ref{eq10}) and (\ref{eq11}) we obtain
\begin{eqnarray*}\label{eqexpmuts}
|\mathbf{u - u_h}|_1 \preceq h^{n-1} ||\mathbf{u}||_n ,~~ if ~ \textbf{u} ~ \in ~ H^n_0
\end{eqnarray*}
required error estimation of the system (\ref{sec1}).
\end{proof}

%%%%From Theorems \ref{thm4} and \ref{thm5} we can derive a bound for $cond_2 G_h$ . The condition number with respect to the 2-norm is the quotient of the largest and smallest eigenvalue. These two extreme eigenvalues can be computed by maximizing and minimizing the \textit{Rayleigh quotient}.
%%%%\begin{align*}
%%%%r(A) = \frac{A^t G_h A}{A^tA} = \frac{a(\mathbf{u_h , u_h})}{||A||^2}
%%%%\end{align*}
%%%%the numerator is $\leq ~ ||\mathbf{u_h}||^2_1 $, so that $r(A) \preceq h^{-2}$. Since $\mathbf{u}_h $ vanishes on $\partial \Omega$ , $a(\mathbf{u_h ,u_h})\succeq ||\mathbf{u}_h||^2_1 \succeq ||\mathbf{u}_h||^2_0  \asymp  ||A||^2$ , so that $r(A) \succeq $ 1.\\
%%%%
%%%%
%%%%Summarizing , we obtain the following result.
%%%%
%%%%\begin{theorem} The condition number of the Galerkin matrix $G_h$ of the web-space $\mathbb{B}_h$ with respect to a \textit{web-basis} $\{B_i\}_i$ is bounded by
%%%%\begin{align*}
%%%% cond_2 G_h \preceq h^{-2}
%%%%\end{align*}
%%%%\end{theorem}
%%%%
%%%%\textbf{Remark} \textit{It is important to notice that  a similar result does not hold for wb-spaces, i.e., if the outer B-splines are used independently to span a space of finite elements.}

\section{Numerical Experiments}
\label{sec5}

In this section we demonstrate a  test example belonging to the class of convection-diffusion equations. In the non isothermal chemical reaction process involving chemical species, the chemical concentrations and the temperature are governed by a coupled system of reaction diffusion equations of the form of (\ref{eq1}).
We present a selected numerical examples concerning our discussions on preceding sections.The domain in our examples is a quadrant circle $\Omega$ = $\{(x,y) : x,y \geq$0 and $ 1-x^2-y^2 \leq 0\}$, and zero boundary conditions are imposed on $\partial\Omega$.Here, n= degree of the polynomial , h= grid width , e = $L_2$ error. Corresponding  graphs for the solutions , residual error functions (for h=0.1 and highest order of n) and computation time shown in Figures were measured on a Intel Core i7-4770S CPU  3.10 GHz.\\

For  the following problem, the finite element approximation with B-splines requires :

\begin{itemize}

\item[$\bullet$] specification of the functions and constants appearing in the partial differential
equations.

\item[$\bullet$] description of the domain and the essential boundary.

\item[$\bullet$] choice of the spline space.

\end{itemize}

The domain and the essential boundary are represented by weight functions.

\subsection{Weight Functions} The domain $\Omega$ is a subset of the unit square described implicitly by a weight function $w_{\Omega}$ :\\
\begin{align*}
 \Omega = \{ (x,y) \in (0,1)^2 : w_{\Omega}(x,y) > 0\}
\end{align*}

The boundary condition u = 0 on the essential part $\Gamma$ of $\partial \Omega$ is incorporated by a
weight function w which is of one sign on $\Omega$ and vanishes linearly on $\Gamma$ cf \cite{hollig2004finite}
\begin{align*}
\Gamma = \{ (x,y) \in \partial \Omega : w_{\Omega}(x,y) = 0\}
\end{align*}

\subsection{Splines} For all boundary value problems, solutions are approximated by linear combinations of weighted B-splines:

\begin{align*}
\mathbf{u(x,y)} \approx \mathbf{u}_h(x,y) = \sum _{k_1 =1} ^ {H+n} \sum _{k_2 =1} ^ {H+n} \ w(x,y) \mathbf{u}_k (x,y) \star \mathbf{b}_k (x,y) ~ ; \hspace {2mm} k =(k_1,k_2)
\end{align*}

for (x,y) $\in (0,1)^2$  with $w_{\Omega} >$ 0.  Here

\begin{itemize}
  \item[$\bullet$] h is the grid width.

  \item[$\bullet$]  n is the degree of the B-Splines.

  \item[$\bullet$]  the coefficients $\mathbf{u}_k$ are vectors

  \item[$\bullet$] $b_k$ is the uniform tensor product B-Spline with support ,

  \begin{align*}
  [(k_1 -n-1)h,k_1 h] \times [(k_2-n-1)h,k_2 h]
  \end{align*}
  corresponds to the grid position k = $(k_1,k_2)$
\end{itemize}

\subsection{Residual for the partial differential equation} The residual of the Ritz-Galerkin approximation $\mathbf{u}_h$ is defined as
\begin{align*}
r(x,y)= (L\mathbf{u}_h)(x,y) - f(x,y)
\end{align*}

where , L is the differential operator of the boundary value problem. The relative error

\begin{align*}
e= ||r||_{0,\Omega}/||f||_{0,\Omega}
\end{align*}

with $||.||_{0,\Omega}$  denoting the $L_2$ norm on $\Omega$ provides a measure of accuracy for the solution
without having to resort to grid refinement,for details \cite{hollig2003nonuniform}

\subsection{Example: BVP with polynomial coefficients}

\begin{equation}
 - \nabla.((1+x)(1+y)\nabla u_1)+ x u_1 + xy u_2 = \frac{x}{10}-\frac{y}{100}
\end{equation}
\begin{equation}
 -\nabla.((1+x)(1+y)\nabla u_2) +x u_2- xy u_1 = \frac{x^2}{100}
\end{equation}

%%%%\vspace{5mm}
%%%%\begin{center}
%%%%% table caption is above the table
%%%%%%\caption{Please write your table caption here}
%%%%\label{tab:6.2}       % Give a unique label
%%%%% For LaTeX tables use
%%%%\begin{tabular}{lll}
%%%%\hline\noalign{\smallskip}
%%%%n & e(h=0.2) & e(h=0.1) \\
%%%%\noalign{\smallskip}\hline\noalign{\smallskip}
%%%%2 & 6.016 e-02 & 3.088 e-02 \\
%%%%3 & 8.262 e-03 & 2.329e-03 \\
%%%%4 & 2.777 e-03 & 6.681e-04\\
%%%%5 & 1.195e-03 &  3.288 e-04\\
%%%%\noalign{\smallskip}\hline
%%%%\end{tabular}
%%%%\end{center}

\begin{figure}[h!]
% Use the relevant command to insert your figure file.
% For example, with the graphicx package use
  \includegraphics[width=0.70\textwidth]{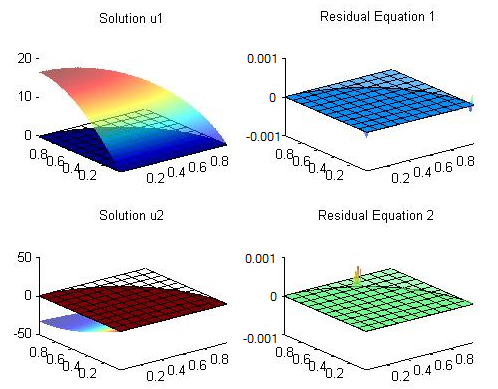}
% figure caption is below the figure
\caption{solution functions and the residual error functions corresponding n=4 and h=0.1}
\label{sec :6.2}       % Give a unique label
\end{figure}

\section{Conclusion}
Following  Galerkin Method of Approximation, Existence and Uniqueness of the   Non-Cooperative Elliptic Equations has been successfully established.Jackson Inequality, stability estimates of WEB-S basis and Poincare Inequality facilitate the derivation of a priori error estimates to the WEBS-FEA of this system. The proposed WEBS-FE based numerical scheme has been successfully tested on few  models.

%% The Appendices part is started with the command \appendix;
%% appendix sections are then done as normal sections
%% \appendix

%% \section{}
%% \label{}

%% If you have bibdatabase file and want bibtex to generate the
%% bibitems, please use

%% else use the following coding to input the bibitems directly in the
%% TeX file.

%% \bibitem[Author(year)]{label}
%% Text of bibliographic item

\section*{Acknowledgements} The authors would like to express their gratitude to Dr.Klaus Hoellig and Joerg Hoerner for helping us in modifying the Matlab code.The Ph.D Fellowship of NBHM-DAE is gratefully acknowledged by first author.


\begin{thebibliography}{}
%
% and use \bibitem to create references. Consult the Instructions
% for authors for reference list style.
\bibitem{sweers2003bifurcation}
Sweers, Guido, and William C. Troy. "On the bifurcation curve for an elliptic system of FitzHugh–Nagumo type." Physica D: Nonlinear Phenomena 177.1-4 (2003): 1-22.

\bibitem{chen2015new}
Chen, Feng. "A new framework of GPU-accelerated spectral solvers: collocation and Glerkin methods for systems of coupled elliptic equations." Journal of Scientific Computing 62.2 (2015): 575-600.

\bibitem{suga2009stability}
Suga, Shinsuke. "Stability and accuracy of lattice Boltzmann schemes for anisotropic advection-diffusion equations." International Journal of Modern Physics C 20.04 (2009): 633-650.

\bibitem{boglaev2012numerical}
Boglaev, Igor. "Numerical solutions of coupled systems of nonlinear elliptic equations." Numerical Methods for Partial Differential Equations 28.2 (2012): 621-640.

\bibitem{chen2015local}
Chen, Yi, et al. "Local polynomial chaos expansion for linear differential equations with high dimensional random inputs." SIAM Journal on Scientific Computing 37.1 (2015): A79-A102.

\bibitem{hollig2001weighted}
Höllig, Klaus, Ulrich Reif, and Joachim Wipper. "Weighted extended B-spline approximation of Dirichlet problems." SIAM Journal on Numerical Analysis 39.2 (2001): 442-462.

\bibitem{hollig2003finite}
Hollig, Klaus. Finite element methods with B-splines. Vol. 26. Siam, 2003.

\bibitem{hollig2003nonuniform}
Höllig, Klaus, and Ulrich Reif. "Nonuniform web-splines." Computer Aided Geometric Design 20.5 (2003): 277-294.

\bibitem{brenner2008mathematical}
Brenner, Susanne, and Ridgway Scott. The mathematical theory of finite element methods. Vol. 15. Springer Science and Business Media, 2007.

\bibitem{gilbarg2015elliptic}
Gilbarg, David, and Neil S. Trudinger. Elliptic partial differential equations of second order. springer, 2015.

\bibitem{strang1973analysis}
Strang, Gilbert, and George J. Fix. An analysis of the finite element method. Vol. 212. Englewood Cliffs, NJ: Prentice-hall, 1973.

%%%%%% Format for Journal Reference
%%%%%Author, Article title, Journal, Volume, page numbers (year)
%%%%%% Format for books
%%%%%\bibitem{RefB}
%%%%%Author, Book title, page numbers. Publisher, place (year)
%%%%%% etc
\end{thebibliography}
\end{document}